\newcommand{\excise}[1]{}
\newcommand{\bull}{_{\bullet}}
\newcommand{\hh}[3]{h^{{#1}} \big( {#2} , {#3} \big) }
\newcommand{\HH}[3]{H^{{#1}} \big( {#2} , {#3} \big) }
\newcommand{\KK}{\mathbf{K}}
\newcommand{\lra}{\longrightarrow}
\newcommand{\NN}{\mathbb{N}}
\newcommand{\OO}{\mathcal{O}}
\newcommand{\QQ}{\mathbb{Q}}
\newcommand{\RR}{\mathbb{R}}
\newcommand{\vecbull}{_{\Vec{\bullet}}}
\DeclareMathOperator{\Image}{Im}
\DeclareMathOperator{\interior}{int}
\DeclareMathOperator{\supp}{supp}
\DeclareMathOperator{\vol}{vol}
\theoremstyle{plain}
\newtheorem{theorem}{Theorem}
\theoremstyle{definition}
\newtheorem{definition}[theorem]{Definition}
\newtheorem{remark}[theorem]{Remark}
\begin{document}

\mbox{}
\vspace{-1.1ex}
\title{Multigraded Fujita Approximation}
\author{Shin-Yao Jow}
\address{Department of Mathematics,
University of Pennsylvania,
Philadelphia, PA 19104}
\email{\texttt{jows@math.upenn.edu}}
\date{11 May 2010}

\begin{abstract}
The original Fujita approximation theorem states that the volume of a big divisor $D$ on a projective variety $X$ can always be approximated arbitrarily closely by the self-intersection number of an ample divisor on a birational modification of $X$. One can also formulate it in terms of graded linear series as follows: let $W_{\bullet} = \{ W_k \}$ be the complete graded linear series associated to a big divisor $D$: \[
  W_k = H^0\big(X,\mathcal{O}_X(kD)\big).  \]
For each fixed positive integer $p$, define $W^{(p)}_{\bullet}$ to be the graded linear subseries of $W_{\bullet}$ generated by $W_p$: \[
 W^{(p)}_{m}=\begin{cases}
                 0, &\text{if $p\nmid m$;}\\
                 \mathrm{Image} \big( S^k W_p \rightarrow W_{kp} \big), &\text{if $m=kp$.}
                \end{cases}  \]  
Then the volume of $W^{(p)}_{\bullet}$ approaches the volume of $W_{\bullet}$ as $p\to\infty$. We will show that, under this formulation, the Fujita approximation theorem can be generalized to the case of multigraded linear series.
\end{abstract}

\keywords{Fujita approximation, multigraded linear series, Okounkov body}
\subjclass[2000]{14C20}

\maketitle

\section{Introduction}

Let $X$ be an irreducible variety of dimension $d$ over an algebraically closed field $\KK$, and let $D$ be a (Cartier) divisor on $X$. When $X$ is projective, the following limit, which measures how fast the dimension of the section space $\HH{0}{X}{\OO_X(mD)}$ grows, is called the \emph{volume} of $D$: \[
\vol(D)=\vol_X(D)=\lim_{m\to\infty} \frac{\hh{0}{X}{\OO_X(mD)}}{m^d/d!}. \]
One says that $D$ is \emph{big} if $\vol(D)>0$. It turns out that the volume is an interesting numerical invariant of a big divisor (\cite[\S 2.2.C]{Laz}), and it plays a key role in several recent works in birational geometry (\cite{BDPP}, \cite{Tsu}, \cite{HM}, \cite{Tak}).

When $D$ is ample, one can show that $\vol(D)=D^d$, the self-intersection number of $D$. This is no longer true for a general big divisor $D$, since $D^d$ may even be negative. However, it was shown by Fujita \cite{Fuj} that the volume of a big divisor can always be approximated arbitrarily closely by the self-intersection number of an ample divisor on a birational modification of $X$. This theorem, known as \emph{Fujita approximation}, has several implications on the properties of volumes, and is also a crucial ingredient in \cite{BDPP} (see \cite[\S 11.4]{Laz} for more details).

In their recent paper \cite{LM}, Lazarsfeld and Musta\c{t}\v{a} obtained, among other things, a generalization of Fujita approximation to \emph{graded linear series}. Recall that a graded linear series $W\bull = \{ W_k \}$ on a (not necessarily projective) variety $X$ associated to a divisor $D$ consists of finite dimensional vector subspaces
\[  W_k \ \subseteq \ \HH{0}{X}{\OO_X(kD)}  \]
for each $k \ge 0$, with $W_0 = \KK$, such that \[
W_k \cdot W_\ell \ \subseteq \ W_{k + \ell}   \] 
for all $k , \ell \ge 0 $.  Here the product on the left denotes the image of $W_k \otimes W_\ell$ under the multiplication map $\HH{0}{X}{\OO_X(kD)} \otimes \HH{0}{X}{\OO_X(\ell D)} \lra \HH{0}{X}{\OO_X(({k + \ell})D)}$. In order to state the Fujita approximation for $W\bull$, they defined, for each fixed positive integer $p$, a graded linear series $W^{(p)}\bull$ which is the sub graded linear series of $W\bull$ generated by $W_p$: \[
 W^{(p)}_{m}=\begin{cases}
                 0, &\text{if $p\nmid m$;}\\
                 \Image \big( S^k W_p \lra W_{kp} \big), &\text{if $m=kp$.}
                \end{cases}  \]  
Then under mild hypotheses, they showed that the volume of $W^{(p)}\bull$ approaches the volume of $W\bull$ as $p\to\infty$. See \cite[Theorem~3.5]{LM} for the precise statement, as well as \cite[Remark~3.4]{LM} for how this is equivalent to the original statement of Fujita when $X$ is projective and $W\bull$ is the complete graded linear series associated to a big divisor $D$ (i.e. $W_k=\HH{0}{X}{\OO_X(kD)}$ for all $k\ge 0$).

The goal of this note is to generalize the Fujita approximation theorem to \emph{multigraded linear series}. We will adopt the following notations from \cite[\S 4.3]{LM}: let $D_1, \ldots, D_r$ be divisors on $X$. For $\Vec{m} = (m_1,\ldots, m_r) \in \NN^r$ we write $\Vec{m}D = \sum m_i D_i$, and we put 
$|\Vec{m}| = \sum |m_i|$. 
\begin{definition}
A \emph{multigraded linear series} $W\vecbull$ on $X$ associated to the $D_i$'s consists of finite-dimensional vector subspaces
\[  W_{\Vec{k}} \ \subseteq \ \HH{0}{X}{\OO_X(\Vec{k}D)}\]
for each $\Vec{k} \in \NN^r$, with $W_{\Vec{0}} = \KK$, such that
\[  W_{\Vec{k}} \cdot W_{\Vec{m}} \ \subseteq \ W_{\Vec{k} + \Vec{m}}, \]
where the multiplication on the left denotes the image of $W_{\Vec{k}} \otimes W_{\Vec{m}} $ under the natural map $\HH{0}{X}{\OO_X(\Vec{k}D) } \otimes \HH{0}{X}{\OO_X(\Vec{m}D) } \lra \HH{0}{X}{\OO_X((\Vec{k}+\Vec{m})D) } $.
\end{definition}

Given $\Vec{a} \in \NN^r$, denote by $W_{\Vec{a},\bullet}$ the singly graded linear series  associated to the divisor $\Vec{a}D$ given by the subspaces $W_{k \Vec{a}} \subseteq \HH{0}{X}{\OO_X(k\Vec{a}D)}$. Then put
\[ \vol_{W\vecbull}(\Vec{a}) \ = \ \vol(W_{\Vec{a},\bullet}) \]
(assuming that this quantity is finite). It will also be convenient for us to consider $W_{\Vec{a},\bullet}$ when $\Vec{a}\in\QQ^r_{\ge 0}$, given by \[
 W_{\Vec{a},k}=\begin{cases}
                W_{k \Vec{a}}, &\text{if $k \Vec{a}\in \NN^r$;}\\
                0, &\text{otherwise.}
               \end{cases}   \]
               
Our multigraded Fujita approximation, similar to the singly-graded version, is going to state that (under suitable conditions) the volume of $W\vecbull$ can be approximated by the volume of the following finitely generated sub multigraded linear series of $W\vecbull$:

\begin{definition}
 Given a multigraded linear series $W\vecbull$ and a positive integer $p$, define $W\vecbull^{(p)}$ to be the sub multigraded linear series of $W\vecbull$ generated by all $W_{\Vec{m}_i}$ with $|\Vec{m}_i|=p$, or concretely \[
   W^{(p)}_{\Vec{m}}=\begin{cases}
     \displaystyle    \qquad  0, &\text{if $p\nmid |\Vec{m}|$;} \\
   \displaystyle \sum_{\substack{|\Vec{m}_i|=p \\ \Vec{m}_1+\cdots+\Vec{m}_k=\Vec{m}}} W_{\Vec{m}_1}\cdots W_{\Vec{m}_k}, &\text{if $|\Vec{m}|=kp$.}
                        \end{cases}  \]
\end{definition}

We now state our multigraded Fujita approximation when $W\vecbull$ is a complete multigraded linear series, since this is the case of most interest and allows for a more streamlined statement. We will point out in Remark~\ref{r:assumptions} afterward what assumptions on $W\vecbull$ are actually needed in the proof.

\begin{theorem} \label{t:main}
 Let $X$ be an irreducible projective variety of dimension $d$, and let $D_1,\ldots,D_r$ be big divisors on $X$. Let $W\vecbull$ be the complete multigraded linear series associated to the $D_i$'s, namely \[
 W_{\Vec{m}}=\HH{0}{X}{\OO_X(\Vec{m}D)} \]
for each $\Vec{m}\in \NN^r$. Then given any $\varepsilon >0$, there exists an integer $p_0=p_0(\varepsilon)$ having the property that if $p\ge p_0$, then 
\begin{equation} \label{eq:1}
 \bigg| 1- \frac{\vol_{W^{(p)}\vecbull}(\Vec{a})}{\vol_{W\vecbull}(\Vec{a})} \bigg|< \varepsilon 
\end{equation}
 for all $\Vec{a}\in \NN^r$.
\end{theorem}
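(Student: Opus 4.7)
The argument uses the global Okounkov cone machinery of \cite[\S 4]{LM}. After fixing an admissible valuation $\nu$ of rank $d$ on $X$ (or on an appropriate model), associate to $W\vecbull$ and to $W^{(p)}\vecbull$ their global Okounkov cones $\Delta := \Delta(W\vecbull)$ and $\Delta_p := \Delta(W^{(p)}\vecbull)$ in $\RR^d \times \RR^r$; the key property is that for $\Vec{a}$ in the interior of the projection of $\Delta$ to $\RR^r$, the $d$-dimensional Lebesgue volume of the slice $\Delta \cap (\RR^d \times \{\Vec{a}\})$ equals $\vol_{W\vecbull}(\Vec{a})/d!$, and similarly for $\Delta_p$. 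Since $W^{(p)}\vecbull \subseteq W\vecbull$ one has $\Delta_p \subseteq \Delta$, and the theorem reduces to comparing slice volumes uniformly in $\Vec{a}$.

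I first prove pointwise convergence. For $\Vec{a}\in\NN^r$ with $q := |\Vec{a}|$ and any positive integer $p$ divisible by $q$, the decomposition
\[
m\Vec{a} \ = \ \underbrace{(p/q)\Vec{a} + \cdots + (p/q)\Vec{a}}_{mq/p \text{ summands}},
\]
each summand of coordinate sum $p$, exhibits the inclusion of graded linear series $(W_{\Vec{a},\bullet})^{(p/q)} \subseteq W^{(p)}_{\Vec{a},\bullet}$. Singly-graded Fujita approximation \cite[Thm.~3.5]{LM}, applied to the big graded linear series $W_{\Vec{a},\bullet}$, then gives $\vol_{W^{(p)}\vecbull}(\Vec{a}) \to \vol_{W\vecbull}(\Vec{a})$ as $p \to \infty$ through multiples of $q$. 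Combining this with the elementary monotonicity $W^{(p')}\vecbull \subseteq W^{(p)}\vecbull$ for $p' \mid p$ (regroup the $\Vec{m}_i$ in the definition into blocks of size $p/p'$) yields pointwise convergence for every $\Vec{a} \in \QQ^r_{>0}$.

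To promote this to uniform convergence I exploit homogeneity together with the concavity of the $1/d$-th power of volume. By $d$-homogeneity of both sides, \eqref{eq:1} is scale-invariant in $\Vec{a}$, so it suffices to prove it uniformly for $\Vec{a}$ on the compact simplex $\Sigma := \{\Vec{a} \in \RR^r_{\ge 0} : |\Vec{a}| = 1\}$. Let $g := \vol_{W\vecbull}^{1/d}$ and $g_p := \vol_{W^{(p)}\vecbull}^{1/d}$; Brunn--Minkowski applied to the Okounkov slices of $\Delta$ and $\Delta_p$ makes $g$ and each $g_p$ concave on $\RR^r_{\ge 0}$, $g_p \le g$, and $g$ is continuous on all of $\Sigma$ by \cite[\S 4]{LM} together with the bigness of the $D_i$. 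Concave functions uniformly bounded above are equi-Lipschitz on compact subsets of the interior of their domain, so pointwise convergence on the dense rational subset of $\Sigma^\circ$ combined with an Arzel\`a--Ascoli argument promotes $g_p \to g$ to uniform convergence on every compact subset of $\Sigma^\circ$. The boundary $\partial \Sigma$ is treated by induction on $r$: the base case $r=1$ is \cite[Thm.~3.5]{LM}, while on each facet $\{a_j = 0\}$ the series $W\vecbull$ and $W^{(p)}\vecbull$ restrict to the complete multigraded series in the remaining $D_i$ and its $p$-th approximation, so the inductive hypothesis applies. A standard gluing using uniform continuity of $g$ and $g_p \le g$ combines the interior and boundary estimates into a single $p_0(\varepsilon)$ valid for all $\Vec{a} \in \Sigma$, hence for all $\Vec{a} \in \NN^r$.

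The main obstacle is precisely this passage from pointwise to uniform convergence in $\Vec{a}$: a naive application of singly-graded Fujita direction by direction produces a rate depending on $\Vec{a}$. The Okounkov cone viewpoint places all the $\Vec{a}$-slices inside a single convex object, and the concavity of $\vol^{1/d}$ is what converts pointwise into locally uniform convergence on the interior of the simplex. The boundary induction on $r$ is needed because concave functions on a closed simplex can behave badly at the boundary, but here the bigness of every $D_i$ together with the inductive hypothesis control this pathology.
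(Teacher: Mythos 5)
Your overall framework --- global Okounkov cones, reduction to the unit simplex by homogeneity, singly-graded Fujita for pointwise control, and Brunn--Minkowski --- matches the paper's, but your mechanism for passing from pointwise to uniform convergence is genuinely different. The paper partitions $T=\{\Vec{a}\in\RR^r_{\ge0}:|\Vec{a}|=1\}$ into small polytopes with rational vertices on which $\vol\bigl(\Delta(W\vecbull)_{\Vec{a}}\bigr)$ oscillates by at most $\varepsilon'$, applies pointwise Fujita only at the finitely many vertices, and then uses convexity of the cone $\Delta(W^{(p)}\vecbull)$ --- the fiber over a convex combination contains the Minkowski sum of the fibers over the vertices --- plus Brunn--Minkowski to push the lower bound to every point of each polytope. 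Since the closed simplex lies in the open big cone, the volume function is uniformly continuous on all of $T$ and interior and boundary points are treated identically; no boundary induction or gluing is needed. Your route via concavity of $\vol^{1/d}$, local equi-Lipschitz bounds and Arzel\`a--Ascoli is a reasonable alternative on compact subsets of $\Sigma^\circ$, but it forces the extra induction on $r$ at $\partial\Sigma$ and the final gluing, which is where your argument is thinnest.

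Two concrete gaps. First, the gluing: knowing $g_p\le g$, uniform convergence on compacta of $\Sigma^\circ$, and uniform convergence on $\partial\Sigma$ does not by itself prevent $g_p$ from dipping on a thin collar near $\partial\Sigma$; uniform continuity of $g$ controls $g$ there, not $g_p$. What actually closes the argument is the concavity of $g_p$ itself: writing a collar point as $(1-t)\Vec{b}+t\Vec{c}_0$ with $\Vec{b}\in\partial\Sigma$ and $\Vec{c}_0$ the barycenter gives $g_p\bigl((1-t)\Vec{b}+t\Vec{c}_0\bigr)\ge(1-t)\,g_p(\Vec{b})$, and this one-sided radial estimate combined with the boundary estimate and the modulus of continuity of $g$ does the job; you need to say this explicitly. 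Second, your pointwise step only gives convergence of $\vol_{W^{(p)}\vecbull}(\Vec{a})$ along $p$ divisible by $q=|\Vec{a}|$, and the monotonicity $W^{(p')}\vecbull\subseteq W^{(p)}\vecbull$ for $p'\mid p$ does not repair this for all large $p$: a large prime $p$ has no divisor that is a large multiple of $q$ once $q\ge2$. (The paper's appeal to singly-graded Fujita at the vertices carries a similar implicit divisibility restriction, but your equicontinuity argument genuinely requires full-sequence convergence at a fixed dense set of rational points, so the issue bites harder in your setup and must be addressed rather than asserted.)
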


\noindent\textbf{Acknowledgments.} The author would like to thank Robert Lazarsfeld for raising this question during an email correspondence.

\section{Proof of Theorem~\ref{t:main}}

The main tool in our proof is the theory of \emph{Okounkov bodies} developed systematically in \cite{LM}. Given a graded linear series $W\bull$ on a $d$-dimensional variety $X$, its Okounkov body $\Delta(W\bull)$ is a convex body in $\RR^d$ that encodes many asymptotic invariants of $W\bull$, the most prominent one being the volume of $W\bull$, which is precisely $d!$ times the Euclidean volume of $\Delta(W\bull)$. The idea first appeared in Okounkov's papers \cite{Oko96} and \cite{Oko03} in the case of complete linear series of ample line bundles on a projective variety. Later it was further developed and applied to much more general graded linear series by Lazarsfeld-Musta\c{t}\v{a} \cite{LM}, and also independently by Kaveh-Khovanskii \cite{KK08,KK09}.

\begin{proof}[Proof of Theorem~\ref{t:main}]
 Let $T=\{ (a_1,\ldots,a_r)\in \RR_{\ge 0}^r \mid a_1+\cdots+a_r=1 \}$, and let $T_{\QQ}$ be the set of all points in $T$ with rational coordinates. The fraction inside \eqref{eq:1} is invariant under scaling of $\Vec{a}$ due to homogeneity, hence it is enough to prove \eqref{eq:1} for $\Vec{a}\in T_{\QQ}$.
 
 Let $\Delta(W\vecbull)\subseteq \RR^d\times\RR^r$ be the global Okounkov cone of $W\vecbull$ as in \cite[Theorem~4.19]{LM}, and let $\pi\colon \Delta(W\vecbull)\to \RR^r$ 
be the projection map. For each $\Vec{a}\in T$ we write $\Delta(W\vecbull)_{\Vec{a}}$ for the fiber $\pi^{-1}(\Vec{a})$.  We also define in a similar fashion the convex cone $\Delta(W^{(p)}\vecbull)$ and the convex bodies $\Delta(W^{(p)}\vecbull)_{\Vec{a}}$. By \cite[Theorem~4.19]{LM}, 
\begin{equation} \label{eq:2}
  \Delta(W\vecbull)_{\Vec{a}}=\Delta(W_{{\Vec{a}},\bullet}) \quad 
                                         \text{for all $\Vec{a}\in T_{\QQ}$.} 
\end{equation}
Note that although \cite[Theorem~4.19]{LM} requires $\Vec{a}$ to be in the relative interior of $T$, here we know that \eqref{eq:2} holds even for those $\Vec{a}$ in the boundary of $T$ because the big cone of $X$ is open and $W\vecbull$ was assumed to be the complete multigraded linear series. By the singly-graded Fujita approximation, $\vol(W_{{\Vec{a}},\bullet})$ can be approximated arbitrarily closely by $\vol(W^{(p)}_{{\Vec{a}},\bullet})$ if $p$ is sufficiently large. (Here by $W^{(p)}_{{\Vec{a}},\bullet}$ we mean $W^{(p)}\vecbull$ restricted to the $\Vec{a}$ direction, which certainly contains $(W_{{\Vec{a}},\bullet})^{(p)}$.) Hence given any finite subset $S\subset T_{\QQ}$ and any $\varepsilon'>0$, we have \[
 \vol\bigl(\Delta(W^{(p)}\vecbull)_{\Vec{a}}\bigr)\ge 
                 \vol\bigl(\Delta(W\vecbull)_{\Vec{a}}\bigr)-\varepsilon' \quad 
                                 \text{for all $\Vec{a}\in S$} \]
as soon as $p$ is sufficiently large.

Because the function $\Vec{a}\mapsto \vol\bigl(\Delta(W\vecbull)_{\Vec{a}}\bigr)$ is uniformly continuous on $T$, given any $\varepsilon'>0$, we can partition $T$ into a union of polytopes with disjoint interiors $T=\bigcup T_i$, in such a way that the vertices of each $T_i$ all have rational coordinates, and on each $T_i$ we have a constant $M_i$ such that 
\begin{equation} \label{eq:3}
  M_i \le \vol\bigl(\Delta(W\vecbull)_{\Vec{a}}\bigr) \le M_i + \varepsilon' \quad
                  \text{for all $\Vec{a}\in T_i$.} 
\end{equation} 
Let $S$ be the set of vertices of all the $T_i$'s. Then as we saw in the end of the previous paragraph, as soon as $p$ is sufficiently large we have 
\begin{equation} \label{eq:4}
 \vol\bigl(\Delta(W^{(p)}\vecbull)_{\Vec{a}}\bigr)\ge 
                 \vol\bigl(\Delta(W\vecbull)_{\Vec{a}}\bigr)-\varepsilon' \quad
                      \text{for all $\Vec{a}\in S$.}
\end{equation}
We claim that this implies 
\begin{equation} \label{eq:5}
 \vol\bigl(\Delta(W^{(p)}\vecbull)_{\Vec{a}}\bigr)\ge 
                 \vol\bigl(\Delta(W\vecbull)_{\Vec{a}}\bigr)-2\varepsilon' \quad
                                \text{for all $\Vec{a}\in T_{\QQ}$.}
\end{equation}
To show this, it suffices to verify it on each of the $T_i$'s. Let $\Vec{v}_1,\ldots,\Vec{v}_k$ be the vertices of $T_i$. Then each $\Vec{a}\in T_i$ can be written as a convex combination of the vertices: $\Vec{a}=\sum t_j \Vec{v}_j$ where each $t_j\ge 0$ and $\sum t_j=1$. Since $\Delta(W^{(p)}\vecbull)$ is convex, we have \[
 \Delta(W^{(p)}\vecbull)_{\Vec{a}}\ \supseteq\ \sum t_j\, \Delta(W^{(p)}\vecbull)_{\Vec{v}_j}, \]
where the sum on the right means the Minkowski sum. By \eqref{eq:3} and \eqref{eq:4}, the volume of each $\Delta(W^{(p)}\vecbull)_{\Vec{v}_j}$ is at least $M_i-\varepsilon'$, hence by the Brunn-Minkowski inequality \cite[Theorem~5.4]{KK08}, we have \[
 \vol\bigl(\Delta(W^{(p)}\vecbull)_{\Vec{a}}\bigr)\ge M_i-\varepsilon' \quad
                             \text{for all $\Vec{a}\in T_i\cap T_{\QQ}$.} \]
This combined with \eqref{eq:3} shows that \eqref{eq:5} is true on $T_i\cap T_{\QQ}$, hence it is true on $T_{\QQ}$ since the $T_i$'s cover $T$. 

Since \eqref{eq:1} follows from \eqref{eq:5} by choosing a suitable $\varepsilon'$, the proof is thus complete.
\end{proof}

\begin{remark} \label{r:assumptions}
 In the statement of Theorem~\ref{t:main} we assume that $W\vecbull$ is the complete multigraded linear series associated to big divisors. But in fact since the main tool we used in the proof is the theory of Okounkov bodies established in \cite{LM}, in particular \cite[Theorem~4.19]{LM}, the really indispensable assumptions on $W\vecbull$ are the same as those in \cite{LM} (which they called Conditions (A$'$) and (B$'$), or (C$'$)). The only place in the proof where we invoke that we are working with a complete multigraded linear series is the sentence right after \eqref{eq:2}, where we want to say that \eqref{eq:2} holds not only in the relative interior of $T$ but also in its boundary. Hence if $W\vecbull$ is only assumed to satisfy Conditions (A$'$) and (B$'$), or (C$'$), then given any $\varepsilon>0$ and any compact set $C$ contained in 
 $T\cap \interior\bigl(\supp(W\vecbull)\bigr)$, there exists an integer 
 $p_0=p_0(C,\varepsilon)$ such that if $p\ge p_0$ then \[
   \vol_{W^{(p)}\vecbull}(\Vec{a})> \vol_{W\vecbull}(\Vec{a})-\varepsilon \]
for all $\Vec{a}\in C\cap T_{\QQ}$.
\end{remark}


\end{document}